\theoremstyle{plain}
\newtheorem{theo}{Theorem}[section]
\newtheorem{lem}[theo]{Lemma}
\theoremstyle{definition}
\newcommand{\w}{\operatorname{W}}
\renewcommand{\r}{\operatorname{R}}
\newcommand{\sgn}{\operatorname{sgn}}
\newcommand{\sym}{{\operatorname{\mathbf{Sym}}}}
\newcommand{\asym}{{\operatorname{\mathbf{ASym}}}}
\newcommand{\e}{\operatorname{E}}
\newcommand{\fd}{\overline{\Delta}}
\newcommand{\bd}{\underline{\Delta}}
\newcommand{\id}{\operatorname{Id}}
\newcommand{\ct}{\operatorname{CT}}
\renewcommand{\r}{\operatorname{r}}
\newcommand{\p}{\operatorname{p}}
\newcommand{\q}{\operatorname{q}}
\renewcommand{\subset}{\operatorname{\mathbf{Subsets}}}
\renewcommand{\nu}{\operatorname{nu}}
\newcommand{\x}{0.24}
\newcommand{\s}{0.9}
\newcommand{\wbulk}[4]{W(\raisebox{-0.35em}{\begin{tikzpicture}[
    decoration={
      markings,
      mark=at position 1 with {\arrow[scale=\s,black]{latex}};    }]
    \ifthenelse{\equal{#1}{1}}{\draw [postaction={decorate}]  (0,0) -- (\x,0);} {\draw [postaction={decorate}](\x,0) -- (0,0);}
    \ifthenelse{\equal{#2}{1}}{\draw [postaction={decorate}]  (\x,-\x) -- (\x,0);} {\draw [postaction={decorate}](\x,0) -- (\x,-\x);}
    \ifthenelse{\equal{#3}{1}}{\draw [postaction={decorate}]  (\x,0) -- (2*\x,0);} {\draw [postaction={decorate}](2*\x,0) -- (\x,0);}
    \ifthenelse{\equal{#4}{1}}{\draw [postaction={decorate}]  (\x,0) -- (\x,\x);} {\draw [postaction={decorate}](\x,\x) -- (\x,0);}
  \end{tikzpicture}}}
\newcommand{\wleft}[2]{W({\,\begin{tikzpicture}[
    decoration={
      markings,
      mark=at position 1 with {\arrow[scale=\s,black]{latex}};    }]
    \ifthenelse{\equal{#1}{1}}{\draw [postaction={decorate}]  (\x,0) -- (\x,\x);} {\draw [postaction={decorate}](\x,\x) -- (\x,0);}
    \ifthenelse{\equal{#2}{1}}{\draw [postaction={decorate}]  (\x,0) -- (2*\x,0);} {\draw [postaction={decorate}](2*\x,0) -- (\x,0);}
  \end{tikzpicture}}}
\newcommand{\wright}[2]{W({\begin{tikzpicture}[
    decoration={
      markings,
      mark=at position 1 with {\arrow[scale=\s,black]{latex}};    }]
    \ifthenelse{\equal{#1}{1}}{\draw [postaction={decorate}]  (0,0) -- (\x,0);} {\draw [postaction={decorate}](\x,0) -- (0,0);}
    \ifthenelse{\equal{#2}{1}}{\draw [postaction={decorate}]  (\x,0) -- (\x,\x);} {\draw [postaction={decorate}](\x,\x) -- (\x,0);}
  \end{tikzpicture}\,}}
\newcommand{\harrow}[1]{\begin{tikzpicture}[
    decoration={
      markings,
      mark=at position 1 with {\arrow[scale=\s,black]{latex}};    }]
    \ifthenelse{\equal{#1}{1}}{\draw [postaction={decorate}]  (0,0) -- (\x,0);} {\draw [postaction={decorate}](\x,0) -- (0,0);}
  \end{tikzpicture}}
\renewcommand{\w}{\operatorname{W}}
\numberwithin{equation}{section}
\begin{document}

\title[Alternating Sign Trapezoids]{Alternating sign trapezoids and a constant term approach}

\author[Ilse Fischer]{Ilse Fischer}
\address{Ilse Fischer, Fakult\"{a}t f\"{u}r Mathematik, Universit\"{a}t Wien, Oskar-Morgenstern-Platz 1, 1090 Wien, Austria}
\email{ilse.fischer@univie.ac.at}

\thanks{The author acknowledges support from the Austrian Science Foundation FWF, START grant Y463 and SFB grant F50.}

\begin{abstract}
We show that there is the same number of $(n,l)$-alternating sign trapezoids as there is of column strict shifted plane partitions of class $l-1$ with at most $n$ parts in the top row, thereby proving a result that was conjectured independently by Behrend and Aigner. The first objects generalize alternating sign triangles, which have recently been introduced by Ayyer, Behrend and the author who showed that they are counted by the same formula as alternating sign matrices. Column strict shifted plane partitions of a fixed class were introduced in a slightly different form by Andrews, and they essentially generalize descending plane partitions. They also correspond to cyclically symmetric lozenge tilings of a hexagon with a triangular hole in the center. In addition, we also provide three statistics on each class of objects and show that their joint distribution is the same. We prove our result by employing a constant term approach that is based on the author's operator formula for monotone triangles. This paper complements a forthcoming paper of Behrend and the author, where the six-vertex model approach is used to show equinumeracy as well as generalizations involving statistics that are different from those considered in the present paper.
\end{abstract}

\maketitle

\section{Introduction}

Revealing the mysterious relation between \emph{alternating sign matrices} and \emph{plane partitions} is a fascinating and challenging project in enumerative combinatorics. For more than 35 years now, combinatorialists fail to find bijections between three classes of objects that are all counted by the following formula. 
\begin{equation}
\label{ASM}
\prod_{i=0}^{n-1} \frac{(3i+1)!}{(n+i)!}
\end{equation}
These objects are $n \times n$ alternating sign matrices (introduced in \cite{RobRum} and enumerated in  \cite{ZeilbergerASMProof, KuperbergASMProof}), \emph{totally symmetric self-complementary plane partitions} in a $2n \times 2n \times 2n$ box (introduced in \cite{StanleySym}, enumerated in \cite{TSSCPP} and related to alternating sign matrices in \cite{MilRobRum86}), and \emph{descending plane partitions} with parts less than or equal to $n$ (introduced and enumerated in \cite{AndrewsMacdonald}, and related to alternating sign matrices in \cite{DPPMRR}). Very recently, a fourth class of objects has been added to this list, namely \emph{alternating sign triangles} \cite{EXTREMEDASASM}.

In the present paper, we provide new mysteries but also new insights concerning this topic by studying the relation between generalizations of alternating sign triangles and generalizations of descending plane partitions. We use a constant term approach that was developed by the author in \cite{FischerNumberOfMT,FischerRefEnumASM,CTAST} to show ``equinumeracy'' of these two classes, which has been conjectured independently first by Behrend and then  by Aigner. The proof is again non-bijectively. However, we introduce three statistics on each family of objects and show that they have the same joint distribution, which should hint at a bijection.  This work complements a forthcoming paper by Behrend and the author \cite{trapezoids}, where equinumeracy is shown using the six-vertex model approach. In that paper, several statistics that are different from ours are provided on each of the two families of objects and it is shown that they also have the same joint distribution.

The objects generalizing alternating sign triangles are defined as follows: For 
$n \ge 1$ and $l \ge 2$, an \emph{$(n,l)$-alternating sign trapezoid} is an array of $1$'s, $-1$'s and $0$'s with $n$ centered rows of lengths $2n+l-2,2n+l-4,\ldots,l$ arranged as follows
$$
\begin{array}{cccccccccc}
a_{1,1} & a_{1,2} & a_{1,3} & \ldots & \ldots & \ldots & \ldots  & \ldots & \ldots & a_{1,2n+l-2} \\
            & a_{2,2} & a_{2,3} & \ldots  & \ldots & \ldots & \ldots & \ldots & a_{2,2n+l-3} & \\
            &              & \ddots  &           &           &           &          & \iddots &                   & \\
            &              &          & a_{n,n} & \ldots & \ldots  & a_{n,n+l-1} &     &                   &  
\end{array},
$$           
such that the following conditions are satisfied.
\begin{enumerate}
\item In each row and column, the non-zero entries alternate.
\item All row sums are $1$.
\item The topmost non-zero entry in each column is $1$ (if such an entry exists at all).
\item The column sums are $0$ for the middle $l-2$ columns, that is for the columns in positions $n+1,n+2,\ldots,n+l-2$.
\end{enumerate}
Here is a $(5,4)$-alternating sign trapezoid.
$$
\begin{array}{cccccccccccccc}
 0& 0& 0& 0& 0& 0& 0& 1& 0& 0& 0& 0 \\
   & 0& 0& 0& 0& 1& 0&-1& 1& 0& 0&  \\
   &   &  0& 1& 0&-1& 0& 1&-1& 1&   & \\
   &   &    & 0& 0& 0& 1&-1& 1&   &   & \\
   &   &    &    & 1& 0&-1& 1&  &   &   &     
\end{array}
$$   
Alternating sign triangles of order $n+1$ as defined in \cite[Definition~2.1]{EXTREMEDASASM} are in simple bijective correspondence with $(n,3)$-alternating sign trapezoids: in order to obtain an alternating sign trapezoid delete the bottom row of the alternating sign triangle (which is $1$ in any case). Alternating sign trapezoids are the infinite family announced in \cite[Section 6.5]{EXTREMEDASASM} that is in a sense a common generalization of alternating sign triangles and of quasi alternating sign triangles. The relation to the latter is the subject of Lemma \ref{qast}: it turns out that quasi alternating sign triangles with $n$ rows should really be seen as $(n,1)$-alternating sign trapezoids (they are defined as $(n,l)$-alternating sign triangles in the excluded case $l=1$ except that the row sum can be $0$ or $1$ in the bottom row) and this is the definition we will use in the following for $(n,1)$-alternating sign trapezoids.
Alternating sign trapezoids with odd base $l \ge 3$ have been introduced before by Aigner in \cite{Aigner}.

Next we define three statistics on alternating sign trapezoids if $l \ge 2$: By property (3) in the definition, the entries in each column sum either to $0$ or $1$. We refer to columns with sum $1$ as $1$-columns. As each of the $n$ rows sum to $1$, the sum of all entries in an $(n,l)$-alternating sign trapezoid is $n$ and so there are precisely $n$ $1$-columns. By property (4), these $1$-columns must be among the union of the $n$ leftmost columns and the $n$ rightmost columns. For a given alternating trapezoid $T$, we define 
$$
\r(T) = \# \text{ of $1$-columns among the $n$ leftmost columns.} 
$$
We say that a $1$-column is a $10$-column if its bottom entry is $0$.
We define 
\begin{align*}
\p(T) &= \# \text{ of $10$-columns among the $n$ leftmost columns}, \\
\q(T) &= \# \text{ of $10$-columns among the $n$ rightmost columns}.
\end{align*}
In the example above, we have $\r(T)=2, \p(T)=1, \q(T)=0$. The weight of an alternating sign trapezoid $T$ is then defined as follows.
$$
\w(T) = P^{\p(T)} Q^{\q(T)} R^{\r(T)}
$$
The weight of an $(n,1)$-alternating sign trapezoid is the following 
\begin{multline}
\label{quasiweight}
\w(T) = P^{\# \text{ of $10$-columns among the $n-1$ leftmost columns}} \, 
 Q^{\# \text{ of $10$-columns among the $n-1$ rightmost columns}} \\
\times (P+Q-1)^{[\text{the central column is a $10$-column}]} 
R^{\# \text{ of $1$-columns among the $n$ leftmost columns}}, \\
\end{multline}
where we use the Iverson bracket
$$
[\text{statement}] = \begin{cases} 1 & \text{if $\text{statement}$ is true,}\\ 0  & \text{otherwise}. \end{cases}
$$
This is in a sense the limit $l \to 1$ of the definition of the weight for $l \ge 2$ because in the limit the central column would contribute to $\p(T)$ and to $\q(T)$ if it was a $01$-column, and so it makes sense that it actually contributes $P+Q-1$. 

In order to define the objects that generalize descending plane partitions, recall that a \emph{strict partition} is a sequence $\lambda=(\lambda_1,\ldots,\lambda_n)$ of positive integers with $\lambda_1 > \ldots > \lambda_n$, and that the shifted Ferrers diagram of shape $\lambda$ is an array of cells with $n$ rows where each row is indented by one cell to the right with respect to the previous row and with $\lambda_i$ cells in row $i$. The shifted Ferrers diagram of the strict partition $(5,4,2,1)$ is displayed in Figure~\ref{shifted}. 
\begin{figure}
\scalebox{0.6}{
\includegraphics{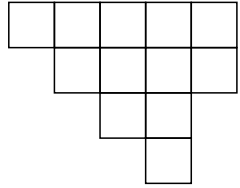}}
\caption{\label{shifted} Shifted Ferrers diagram of shape $(5,4,2,1)$.}
\end{figure}
A \emph{column strict shifted plane partition} is a filling of a shifted Ferrers diagram with positive integers such that rows are decreasing and columns are strictly decreasing. A column strict shifted plane partition of shape $(5,4,2,1)$ is for instance 
$$
\begin{array}{ccccc}
6 & 5 & 5 & 4 & 2 \\
  & 4 & 3 & 3 & 1 \\
  &   & 2 & 2 &   \\
  &   &   & 1  &  
\end{array}.
$$
A column strict shifted plane partition is of \emph{class $k$} if the first part of each row exceeds the length of the row by precisely $k$. For the example given above there is of course no $k$ such that the plane partition is of class $k$. A column strict shifted plane partition of class $2$ is for instance 
$$
\begin{array}{ccccc}
7 & 7 & 6 & 6 & 3 \\
  & 6 & 5 & 5 & 1 \\
  &   & 4 & 2 &    
\end{array}.
$$
These objects have appeared before, for the first time in slightly different form in \cite{AndrewsMacdonald}. Descending plane partitions are the objects defined in that paper that correspond to column strict shifted plane partitions of class $2$. The definition used here is taken from \cite{MRR87}. It was established in \cite{DPPKratt,55} that these objects are equivalent to cyclically symmetric lozenge tilings of a hexagon where a triangle has been removed from the center. 

Now we define the three statistics on column strict shifted plane partitions $C$. One of these statistics depends on a fixed parameter $d \in \{1,\ldots,k\}$ and so this statistic is actually a member of a family of $k$ statistics. (However, the family is empty if $k=0$ and we will provide a different definition of the weight if $d=0$.)
\begin{align*}
\r(C) &=  \# \text{ of rows}  \\
\p_d(C) &= \# \text{ of parts equal to $j-i+d$ where $i$ is the row and $j$ is the column of the part} \\
\q(C) &= \# \text{ of $1$'s}
\end{align*}
In the example above, we have $\r(C)=3, \p_{1}(C)=1, \q(C)=1$.
The weight of a column strict shifted plane partition $C$ is then 
$$
\w_d(C)=P^{\p_d(C)} Q^{\q(C)} R^{\r(C)},
$$
which clearly also depends on $d$.
For $d=0$, the weight is defined as follows\footnote{In order to see that this is definition is natural as it is somewhat the limit $d \to 0$ of the definition for $d \ge 1$, observe that, for $d \ge 1$, the first part of a row cannot contribute to $\q(C)$ (as $k \ge 1$ and the first part of a row is at least $k+1$) and that no $1$ can contribute to $\p_d(C)$ (since such a part would have to be the first part of its row, but this part is at least $k+1 \ge 2$).  Now, in the limit $d \to 0$ (excluding now first parts from $\q(C)$), a $1$ that is the second part of a row would contribute to both $\p_d(C)$ and $\q(C)$, which is why it actually makes sense that it contributes $P+Q-1$, compare also to the similar situation for $(n,1)$-alternating sign trapezoids. Also note that such a $1$ can only appear in the bottom row.}: 
\begin{multline*}
\w_0(C)= P^{\# \text{ of parts $>1$ equal to $j-i$ where $i$ is the row and $j$ is the column of the part}} \\
\times Q^{\# \text{ of $1$'s not in the first nor in the second position of a row}} \\
\times (P+Q-1)^{[\text{$1$ is the second part in the bottom row}]} \,
R^{\# \text{ of rows}}.
\end{multline*}
We are now in the position to state our main result.
\begin{theo} 
\label{main}
Let $n,l \ge 1$ and $0 \le d \le l-1$. Then the generating function of $(n,l)$-alternating sign trapezoids $T$ with respect to the weight $\w(T)$ is equal to the generating function of column strict shifted plane partitions $C$ of class $l-1$ where the length of the first row does not exceed $n$ with respect to the weight $\w_d(C)$.
\end{theo}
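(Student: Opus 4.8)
The plan is to convert each side of the identity into a constant-term expression in $n$ auxiliary variables $x_1,\dots,x_n$ and then to prove that the two constant terms coincide by manipulating the underlying rational functions; the parameters $P,Q,R$ enter as formal indeterminates, so it suffices to establish the equality at the level of polynomials in $P,Q,R$. This is the philosophy of the constant-term method developed in \cite{FischerNumberOfMT,FischerRefEnumASM,CTAST}, built on the author's operator formula for the number of monotone triangles with prescribed bottom row.

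For the trapezoid side I would first record that conditions (1)--(4) force an $(n,l)$-alternating sign trapezoid to be completely determined by a monotone-triangle-type object together with the positions of its $1$-columns among the $n$ leftmost and $n$ rightmost columns. Using the operator formula, I would write $\sum_T \w(T)$ as the constant term of an explicit rational function, where the statistic $\r(T)$ controls how the $n$ columns split into left and right $1$-columns and where $\p(T),\q(T)$ weight the $10$-refinement on each side; concretely I expect $R$ to appear through a substitution that marks left columns and $P,Q$ to appear through simple linear factors attached to the boundary variables. As a consistency check I would confirm that specializing this formula to $l=1$ reproduces the weight in \eqref{quasiweight}, the $(P+Q-1)$ factor of the central column emerging automatically from the overlap of the left and right contributions.

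For the plane-partition side I would read a column strict shifted plane partition of class $l-1$ as a family of interlacing integer sequences, equivalently as nonintersecting lattice paths, so that by the Lindstr\"om--Gessel--Viennot lemma $\sum_C \w_d(C)$ becomes a determinant and hence a constant term in the same $n$ variables. Here $\r(C)$ fixes the number of paths, $\q(C)$ weights the bottom boundary, and $\p_d(C)$ weights the parts on the diagonal $j-i+d$. A delicate point is that the left-hand side does not depend on $d$, so the encoding must make the $d$-independence of $\sum_C \w_d(C)$ transparent; I expect the diagonal bookkeeping to shift uniformly in $d$ and to leave the constant term unchanged, with the $d=0$ weight and its $(P+Q-1)$ correction falling out as the boundary case exactly as the footnoted limit heuristic suggests.

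The final and hardest step is to equate the two constant terms. Since the two families are a priori unrelated, the two rational functions look quite different, and their equality is not formal. The standard tools are antisymmetrization over $x_1,\dots,x_n$, the freedom to discard Laurent-polynomial summands that contribute nothing to the constant term, and a change of variables---plausibly $x_i \mapsto x_i^{-1}$ or a M\"obius-type substitution---that exchanges the roles of the left and right boundaries and thereby matches the trapezoid integrand to the plane-partition integrand. I would isolate the resulting identity as the technical core: after clearing denominators it should reduce to a symmetric-function identity, and the proof will consist in showing that the difference of the two symmetrized integrands has vanishing constant term. I expect this to be the main obstacle, resting on a hidden symmetry of the monotone-triangle operator formula together with careful tracking of the $P,Q,R$-weighted boundary factors through the substitution.
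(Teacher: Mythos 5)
Your overall framing---the operator formula on the trapezoid side, Lindstr\"om--Gessel--Viennot on the plane-partition side, with $P,Q,R$ carried as formal weights---matches the paper's, and your treatment of the two endpoints is essentially right. But the architecture of your final step is not what the paper does, and as proposed it has a genuine gap. You plan to put \emph{both} sides into constant-term form and then show that the difference of the two symmetrized integrands has vanishing constant term via a change of variables; you acknowledge this as the main obstacle but offer no mechanism for it. The paper never equates two integrands. Instead it evaluates the trapezoid constant term in closed form: after summing over the positions of the $1$-columns (a geometric-series summation producing \eqref{arg}), the symmetrizer is computed in two stages---first over the left block $Y_1,\ldots,Y_m$ and the right block $Y_{m+1},\ldots,Y_n$ separately using the antisymmetrizer evaluation \eqref{asym} from \cite{CTAST}, then over the interleavings of the two blocks by recognizing the resulting rational function as a Cauchy-type determinant (via the substitution $X_i=1+Y_i$ for $i \le m$ and $X_i=-1/Y_i$ for $i>m$), so that the sum over $m$ and over interleavings collapses, by row-linearity of the determinant, into the single determinant of Section~\ref{cauchy} with $R$ marking $m$. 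The constant term of that expression is then converted into the binomial determinant \eqref{det} using the formula from \cite{BehrendWeightedEnum}, and only then is Lindstr\"om--Gessel--Viennot invoked, to read \eqref{det} as the plane-partition generating function.

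The concrete missing ideas in your plan are therefore: (i) how to handle the sum over the split of the $n$ $1$-columns into $m$ left ones and $n-m$ right ones---this is where $R$ enters, and it is resolved by the Cauchy-determinant and row-linearity trick, not by a hidden symmetry of the integrand; (ii) the explicit symmetrizer evaluation \eqref{asym}, without which the trapezoid side remains an intractable sum over ${\mathcal S}_n$; and (iii) the bridge from a multivariate constant term to a finite binomial determinant. Your proposed M\"obius-type substitution exchanging the roles of the two boundaries does occur in the paper (the replacement $Y_i \mapsto -Z_i/(1+Z_i)$ on the left block), but only as a normalization that makes the lemma applicable, not as the engine of the proof. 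One smaller point: the $d$-independence of the plane-partition generating function does not need to be engineered into the encoding---it is automatic, because the determinant \eqref{det} does not involve $d$ at all, and $d$ enters only at the very end when one chooses which lattice-path statistic (crossing the line $y=x+d$ through a west step) realizes the weight $P$.
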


For $d=l-1$, the result was conjectured independently by Behrend.

\smallskip

If $l \ge 2$ and $d \ge 1$, the statement is equivalent to the fact that, for non-negative integers $p,q,r$, the number of $(n,l)$-alternating sign trapezoids $T$ with $\p(T)=p, \q(T)=q$ and $\r(T)=r$ is equal to the number of column strict shifted plane partitions $C$ of class $l-1$ where the lengths of the first row does not exceed $n$, $\p_d(C)=p, \q(C)=q$ and $\r(C)=r$. It is fairly straightforward to construct a bijection if $r=1$, which is left to the reader. The case $r>1$ seems to be less clear. 

In order to illustrate the result, we consider the case $n=2$, $l=4$. The list of $(2,4)$-alternating sign trapezoids $T$ is given next, together with the values of the statistics which are displayed below each trapezoid in the form $(\p(T),\q(T),\r(T))$.

\medskip
$$
\begin{array}{cccc}
\begin{array}{cccccc} 1&0&0&0&0&0 \\ &1&0&0&0& \end{array} &
\begin{array}{cccccc} 1&0&0&0&0&0 \\ &0&0&0&1& \end{array} &
\begin{array}{cccccc} 0&1&0&0&0&0 \\ &0&0&0&1& \end{array} &
\begin{array}{cccccc} 0&0&1&0&0&0 \\ &1&-1&0&1& \end{array} \\
(0,0,2) &  (0,0,1) & (1,0,1) & (0,0,1)  \vspace{2mm} \\ 
\begin{array}{cccccc} 0&0&0&1&0&0 \\ &1&0&-1&1& \end{array} &
\begin{array}{cccccc} 0&0&0&0&1&0 \\ &1&0&0&0& \end{array} &
\begin{array}{cccccc} 0&0&0&0&0&1 \\ &1&0&0&0& \end{array} &
\begin{array}{cccccc} 0&0&0&0&0&1 \\ &0&0&0&1& \end{array} \\
(0,0,1) & (0,1,1) & (0,0,1) & (0,0,0) 
\end{array}
$$

The column strict shifted plane partitions of class $l-1=3$ whose first row has at most $2$ parts 
and the corresponding values for the statistics (depending on the choice of $d$) are 
$$
\begin{array}{c||c|c|c|c|c|c|c|c}
    &  \emptyset & \begin{array}{c} 4 \end{array} & \begin{array}{cc} 5 & 1 \end{array} &  
\begin{array}{cc} 5 & 2 \end{array} & 
\begin{array}{cc} 5 & 3 \end{array} &
\begin{array}{cc} 5 & 4  \end{array} & 
\begin{array}{cc} 5 & 5  \end{array} &
\begin{array}{cc} 5 & 5 \\ & 4 \end{array} \\ \hline  
d=1 & (0,0,0) & (0,0,1) & (0,1,1) & (1,0,1) & (0,0,1) & (0,0,1) & (0,0,1) & (0,0,2) \\ \hline 
d=2 & (0,0,0) & (0,0,1) & (0,1,1) & (0,0,1) & (1,0,1) & (0,0,1) & (0,0,1) & (0,0,2) \\ \hline 
d=3 & (0,0,0) & (0,0,1) & (0,1,1) & (0,0,1) & (0,0,1) & (1,0,1) & (0,0,1) & (0,0,2) \\ \hline 
\end{array}.
$$
As a corollary of the theorem one obtains a closed product formula for the number of $(n,l)$-alternating sign trapezoids since Andrews has derived a formula for the number of column strict shifted plane partitions of a given class and restricted length of the first row in  \cite[Theorem 8]{AndrewsMacdonald}.

The rest of the paper is devoted to the proof of Theorem~\ref{main}. It is structured into six more sections. In Section~\ref{operatorSection}, we 
describe a known formula for the number of certain truncated monotone triangles and apply it to alternating sign trapezoids. This generalizes work from \cite{CTAST}. In Section~\ref{symmetrizerSec}, we use this to deduce a constant term formula for the generating function of alternating sign trapezoids involving the symmetrizer. The computation of the symmetrizer is then done in two steps. In Section~\ref{seperately}, we use a lemma from \cite{CTAST} to 
perform the first step. In Section~\ref{cauchy}, we use the Cauchy determinant to provide a determinant formula for the symmetrizer. In Section~\ref{binomialSec}, we show that the constant term of the symmetrizer can be expressed by a binomial determinant. In Section~\ref{InterpretSec}, the  Lindstr\"om-Gessel-Viennot theorem \cite{Lindstr,GesselViennot} is used to give the determinant the appropriate combinatorial interpretation in terms of column strict shifted plane partitions of a fixed class.

We conclude this section by explaining the relation of the work of the current paper to the work presented in \cite{CTAST}. In the latter paper, we considered the special case $l=3$, that is alternating sign triangles, and showed (among other things) that they are equinumerous with totally symmetric self-complementary plane partitions using also a constant term approach based on the operator formula for monotone triangles. In that paper, we were able to involve one statistic $\rho$, which is on the alternating sign triangle side a statistic that is a combination of the three statistics considered in the present paper. To be more precise, we have 
$$
\rho(T) = \r(T)-\p(T)+\q(T)+1.
$$
On totally symmetric self-complementary plane partitions, the corresponding statistic is one that had already been introduced in \cite{MilRobRum86}, where it was conjectured that it has the same distribution as the column of the unique $1$ in the top row of an alternating sign matrix, a fact that was eventually proven in \cite{FonZinn}. It turned out that the approach from \cite{CTAST} cannot be (easily) extended to alternating sign trapezoids. The approach presented in the current paper parallels the one from \cite{CTAST} only in Section~\ref{operatorSection}, and at some point in Section~\ref{seperately}.

\section{An operator formula for the generating function of alternating sign trapezoids with prescribed positions of the $1$-columns}
\label{operatorSection}

In this section we derive an operator formula for the generating function of alternating sign trapezoids with prescribed positions of the $1$-columns. For this purpose, we transform the objects into certain partial monotone triangles. This idea first appeared in \cite{FischerESI} and it was later used in 
\cite{Aigner,CTAST}.

Recall that a \emph{Gelfand-Tsetlin pattern} is a triangular array 
$(m_{i,j})_{1 \le j \le i \le n}$ of integers, where the entries are usually arranged as follows 
\begin{equation}
\label{indexing}
\begin{array}{ccccccccccccccccc}
  &   &   &   &   &   &   &   & m_{1,1} &   &   &   &   &   &   &   & \\
  &   &   &   &   &   &   & m_{2,1} &   & m_{2,2} &   &   &   &   &   &   & \\
  &   &   &   &   &   & \dots &   & \dots &   & \dots &   &   &   &   &   & \\
  &   &   &   &   & m_{n-2,1} &   & \dots &   & \dots &   & m_{n-2,n-2} &   &   &   &   & \\
  &   &   &   & m_{n-1,1} &   & m_{n-1,2} &  &   \dots &   & \dots   &  & m_{n-1,n-1}  &   &   &   & \\
  &   &   & m_{n,1} &   & m_{n,2} &   & m_{n,3} &   & \dots &   & \dots &   & m_{n,n} &   &   &
\end{array}
\end{equation}
such that there is a weak increase in northeast and southeast direction, i.e., 
$m_{i+1,j} \le m_{i,j} \le m_{i+1,j+1}$ for all $i,j$ with $1 \le j \le i < n$. 
A Gelfand-Tsetlin pattern in which each row is strictly increasing except for possibly the bottom row is said to be a \emph{monotone triangle}. (This definition deviates from the standard definition where also the bottom row needs to be strictly increasing.)

\begin{figure}
\scalebox{0.3}{
\includegraphics{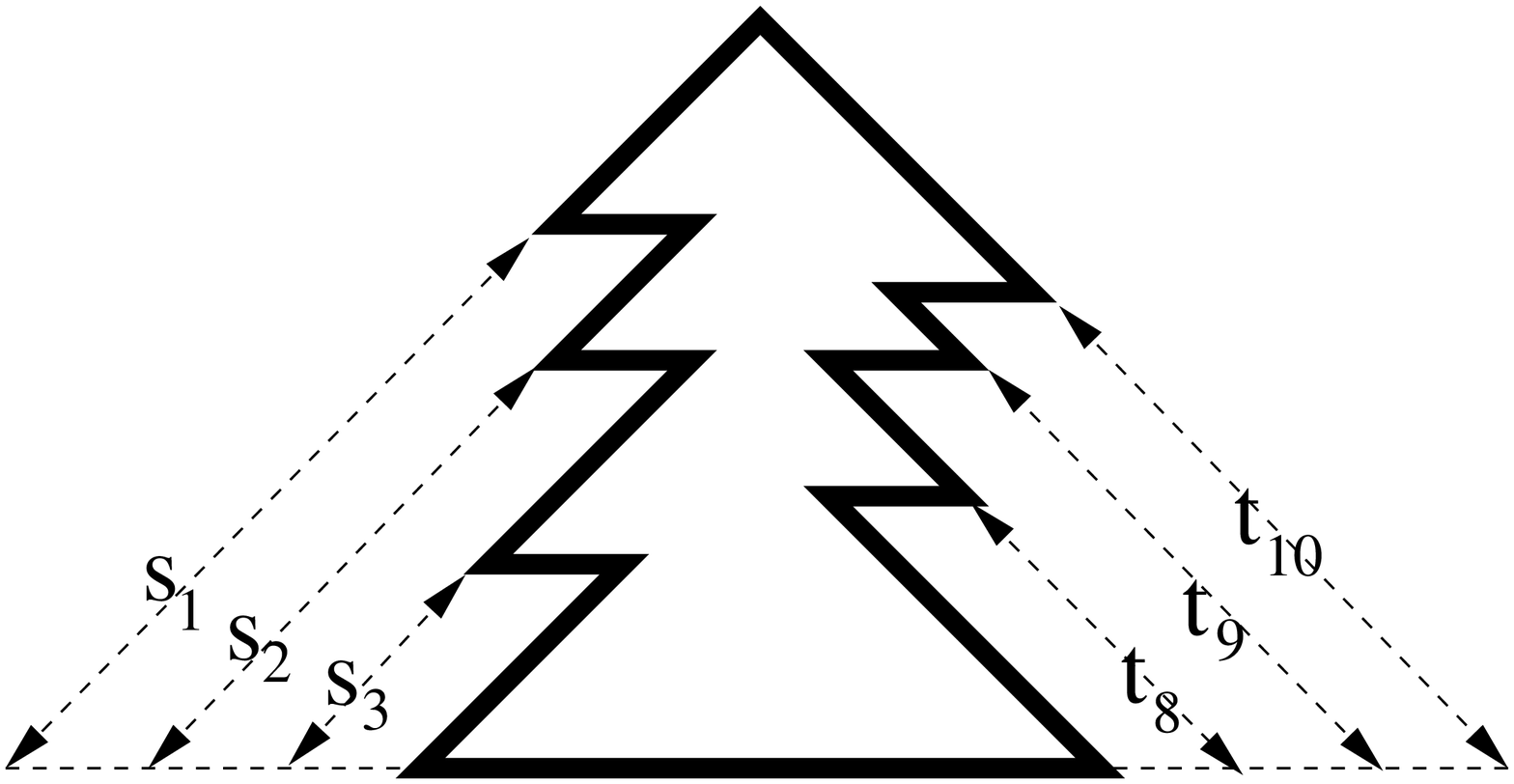}}
\caption{\label{st_tree} The shape of an $(\mathbf{s},\mathbf{t})$-tree.}
\end{figure}

We define certain partial monotone triangles which we call \emph{$(\mathbf{s},\mathbf{t})$-trees}: Let $l,r,n$ be non-negative integers with $l+r \le n$. Suppose 
$\mathbf{s}=(s_1,s_2,\ldots,s_l)$, $\mathbf{t}=(t_{n-r+1},t_{n-r+2},\ldots,t_n)$ are sequences of non-negative integers, where $\mathbf{s}$ is weakly decreasing, while $\mathbf{t}$ is weakly increasing. The shape of an $(\mathbf{s},\mathbf{t})$-tree of order $n$ is obtained from the shape of a monotone triangle with $n$ rows when deleting the bottom $s_i$ entries from the $i$-th NE-diagonal for $1 \le i \le l$  (NE-diagonals are counted from the left) and the bottom $t_i$ entries from the $i$-th SE-diagonal for $n-r+1 \le i \le n$ (SE-diagonals are also counted from the left), see Figure~\ref{st_tree}. We assume in the following that 
there is no interference between the deletion of the entries in the $l$  
leftmost NE-diagonals (as prescribed by $\mathbf{s}$) with the deletion of the entries from the $r$ rightmost SE-diagonals (as prescribed by $\mathbf{t}$).

In an $(\mathbf{s},\mathbf{t})$-tree, an entry $m_{i,j}$ is said to be \emph{regular} if it has a SW neighbour $m_{i+1,j}$ and a SE neighbour $m_{i+1,j+1}$.
We require the following monotonicity properties in an 
$(\mathbf{s},\mathbf{t})$-tree: 
\begin{enumerate}
\item Each regular entry $m_{i,j}$ has to fulfill $m_{i+1,j} \le m_{i,j} \le m_{i+1,j+1}$.
\item Two adjacent regular entries $m_{i,j}, m_{i,j+1}$ in the same row have to be distinct.   
\end{enumerate}
This extends the notion of monotone triangles, as a monotone triangle of order $n$ is just an $(\mathbf{s},\mathbf{t})$-tree, where $l,r$ are any two numbers 
with $l+r \le n$, $\mathbf{s}=(\underbrace{0,\ldots,0}_{l})$ and $\mathbf{t}=(\underbrace{0,\ldots,0}_r)$.

We fix some notation that is needed in the following: We use the \emph{shift operator} $\e_x$, the \emph{forward difference} $\fd_{x}$ and the \emph{backward difference} $\bd_{x}$, which are defined as follows.
\begin{align*}
\e_x p(x) &:= p(x+1) \\
\fd_{x} &:= \e_x - \id \\
\bd_{x} &:= \id - \e_x^{-1}
\end{align*} 
The following polynomial in $\mathbf{x}=(x_1,\ldots,x_n)$ plays an important role.
\begin{equation}
\label{operator}
M_n(\mathbf{x}):=\prod_{1 \le p < q \le n} \left (1 + \fd_{x_q} + \fd_{x_p} \fd_{x_q} \right)
\prod_{1 \le i < j \le n} \frac{x_j-x_i}{j-i}
\end{equation}
The next theorem is the main result of \cite{FischerRefEnumASM} and is based on previous work in \cite{FischerNumberOfMT}.
\begin{theo} 
\label{truncated}
Let $n,l,r$ be non-negative integers with $l+r \le n$. Suppose $b_1,\ldots,b_n$ is a weakly increasing sequence of integers, and $\mathbf{s}=(s_1,s_2,\ldots,s_l)$, $\mathbf{t}=(t_{n-r+1},t_{n-r+2},\ldots,t_n)$ are a
weakly decreasing and a weakly increasing sequence of non-negative integers, respectively.
Then the evaluation of the following polynomial
\begin{equation}
\label{stoperator}
(-\fd_{x_1})^{s_1} \cdots (-\fd_{x_l})^{s_l} 
\bd_{x_{n-r+1}}^{t_{n-r+1}} 
  \cdots \bd_{x_{n}}^{t_{n}}  
 M_n(\mathbf{x})
\end{equation}
at $(x_1,\ldots,x_n)=(b_1,\ldots,b_n)$ 
is the number of $(\mathbf{s},\mathbf{t})$-trees of order $n$ with the following properties:
\begin{itemize}
\item For $1 \le i \le n-r$, the  bottom entry of the $i$-th NE-diagonal is $b_i$.
\item For $n-r+1 \le i \le n$, the bottom entry of the $i$-th SE-diagonal is $b_i$.
\end{itemize}
\end{theo}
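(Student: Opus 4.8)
The plan is to prove the statement by induction on the total number of deleted entries $\Sigma := s_1+\dots+s_l+t_{n-r+1}+\dots+t_n$, with the prescribed bottom entries $\mathbf{b}$ (weakly increasing, hence admissible) kept general throughout. In the base case $\Sigma=0$ we have $\mathbf{s}=\mathbf{t}=\mathbf{0}$, the operator in \eqref{stoperator} is simply $M_n(\mathbf{x})$, and the $(\mathbf{s},\mathbf{t})$-trees are exactly the monotone triangles with bottom row $\mathbf{b}$; the assertion is then precisely the operator formula for the number of monotone triangles established in \cite{FischerNumberOfMT} (see \eqref{operator}), so nothing needs to be shown.

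For the inductive step, assuming $\Sigma\ge 1$ with the $\mathbf{s}$-part non-empty (the $\mathbf{t}$-part is handled symmetrically, with $\bd$ in place of $-\fd$), I would single out the largest index $i$ with $s_i\ge 1$. Since $\mathbf{s}$ is weakly decreasing, this $i$ satisfies $s_i>s_{i+1}$ (with $s_{l+1}:=0$), so $\mathbf{s}-e_i$ is again weakly decreasing and the inductive hypothesis applies to $(\mathbf{s}-e_i,\mathbf{t})$. Writing $F_{\mathbf{s},\mathbf{t}}$ for the polynomial in \eqref{stoperator}, one has trivially $F_{\mathbf{s},\mathbf{t}}=(-\fd_{x_i})\,F_{\mathbf{s}-e_i,\mathbf{t}}$, where $-\fd_{x_i}=\id-\e_{x_i}$. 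The task is therefore to match the action of $-\fd_{x_i}$ with the geometric operation of deleting the bottom entry $d:=m_{n-s_i+1,i}$ of the $i$-th NE-diagonal, which converts an $(\mathbf{s}-e_i,\mathbf{t})$-tree into an $(\mathbf{s},\mathbf{t})$-tree.

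The combinatorial core is that the count is a one-sided partial sum in the variable $x_i$. The entry $d$ sits at the bottom of its NE-diagonal, hence has no SW-neighbour and is never regular, so monotonicity property~(2) never involves it. The only entries that can constrain $d$ through property~(1) are the two entries of which $d$ is a SW- or SE-neighbour: its NE-neighbour $m_{n-s_i,i}$, which is regular and yields $d\le m_{n-s_i,i}$, and its NW-neighbour $m_{n-s_i,i-1}$, which is present only when $s_{i-1}=s_i$ and in that case is itself the bottom entry of the $(i-1)$-st NE-diagonal, hence not regular and imposing no bound. Thus $d$ obeys the single inequality $d\le m_{n-s_i,i}$, so the number of $(\mathbf{s}-e_i,\mathbf{t})$-trees, viewed as a function of the prescribed value $b_i=d$, equals $\sum_{c\ge b_i}V(c)$, where $V(c)$ counts the completions with $m_{n-s_i,i}=c$; applying $-\fd_{x_i}$ extracts the single term $V(b_i)$. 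One then checks that deleting $d$ erases it together with its lone constraint and turns its former NE-neighbour $m_{n-s_i,i}$ into a non-regular bottom entry, and that the constraint lost in this passage is satisfied automatically for admissible $\mathbf{b}$, so that $V(b_i)$ is exactly the number of $(\mathbf{s},\mathbf{t})$-trees with $m_{n-s_i,i}=b_i$.

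The step I expect to be the main obstacle is making this partial-summation rigorous at the level of the polynomials rather than of honest counts. Evaluating $(-\fd_{x_i})F_{\mathbf{s}-e_i,\mathbf{t}}$ at $\mathbf{b}$ requires the value of $F_{\mathbf{s}-e_i,\mathbf{t}}$ at the shifted argument $(\dots,b_i+1,\dots)$, which need not be an admissible prescription, so the inductive hypothesis does not literally apply there; moreover, in degenerate admissible configurations the constraint dropped when $m_{n-s_i,i}$ loses regularity need not be vacuous, so the naive telescoping is not exact on the nose. Controlling these out-of-range and degenerate contributions — showing that the signed values of the polynomial at non-admissible arguments cancel so that the difference operator still computes the intended combinatorial partial sum — is exactly what forces the use of the structural shift-antisymmetry properties of $M_n(\mathbf{x})$ from \cite{FischerNumberOfMT}. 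This bookkeeping is the technical heart of the argument, and I would import it from \cite{FischerRefEnumASM} rather than reprove it.
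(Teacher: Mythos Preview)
The paper does not prove this theorem at all; it simply quotes it as ``the main result of \cite{FischerRefEnumASM}'' (itself resting on \cite{FischerNumberOfMT}) and moves on. Your outline --- induction on the total truncation $\Sigma$, peeling off one extremal boundary entry with $-\fd$ (resp.\ $\bd$) at each step, and isolating the degenerate/out-of-range bookkeeping as the technical crux to be handled via the shift-antisymmetry properties of $M_n$ --- is in fact a faithful sketch of the argument in \cite{FischerRefEnumASM}. Since you yourself end by importing that bookkeeping from \cite{FischerRefEnumASM}, your proposal and the paper's treatment amount to the same thing: a citation.
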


We aim at applying this theorem to $(n,l)$-alternating sign trapezoids with $l \ge 2$. We index the $n$ leftmost columns of an $(n,l)$-alternating sign trapezoid with $-n,-n+1,\ldots,-1$ from left to right, while the $n$ rightmost columns are indexed with $1,2,\ldots,n$. Now suppose $-n \le j_1 < j_2 < \ldots < j_m < 0 < j_{m+1} < j_{m+2} < \ldots < j_n \le n$. Next we illustrate with the help of the $(5,4)$-alternating sign trapezoid given in the introduction why $(n,l)$-alternating sign trapezoids that have their $1$-columns in positions $j_1,\ldots,j_n$ correspond to $(\mathbf{s},\mathbf{t})$-trees of order $n$ with
\begin{align*}
\mathbf{s} &= (-j_1-1,\ldots,-j_m-1) \\
\mathbf{t} &= (j_{m+1}-1,\ldots,j_n-1)
\end{align*}
such that $j_i$ is the bottom entry of the $i$-th NE-diagonal for $1 \le i \le m$ and $j_i+l-3$ is the bottom entry of the $i$-th SE-diagonal for $m+1 \le i \le n$.
The argument has essentially already appeared in \cite{FischerESI,CTAST} (where we considered the special case $l=3$) and \cite{Aigner} (where it was explained for odd base $l \ge 3$).  

Note that we have $(j_1,\ldots,j_n)=(-2,-1,1,2,3)$ in our example. First we add to each entry all the entries that are situated in the same column above the entry and obtain the following trapezoid of $1$'s and $0$'s.
$$
\begin{array}{cccccccccccccc}
 0& 0& 0& 0& 0& 0& 0& 1& 0& 0& 0& 0 \\
   & 0& 0& 0& 0& 1& 0&0& 1& 0& 0&  \\
   &   &  0& 1& 0&0& 0& 1&0& 1&   & \\
   &   &    & 1& 0& 0& 1&0& 1&   &   & \\
   &   &    &    & 1& 0&0& 1&  &   &   &     
\end{array}.
$$  
Next we record row by row the columns of the $1$'s, where only for this purpose the columns are now indexed from left to right with
$-n,-n+1,\ldots,n+l-3$, as follows (for details we refer to 
\cite[Theorem~5]{CTAST}; for the $j_i$'s we still use the indexing that was introduced before):
$$
\begin{array}{cccccc}
           &  &&    2 &&  \\
         &  & 0 & & 3 &  \\
     & -2 & & 2  && 4   \\
    -2   &&1 &&  3 &  \\
     & -1&& 2  &&
\end{array}     
$$
This is an $(\mathbf{s},\mathbf{t})$-tree with 
\begin{align*}
\mathbf{s} &= (1,0)=(-j_1-1,-j_2-1) \\
\mathbf{t} &= (0,1,2)=(j_3-1,j_4-1,j_5-1),
\end{align*}
where $j_i$ is the bottom entry of the $i$-th NE-diagonal for $1 \le i \le m=2$ and $j_i+l-3=j_i+1$ is bottom entry of the $i$-th SE-diagonal for $3=m+1 \le i \le n=5$ (the first SE-diagonal actually being empty because of the truncation to the NE-diagnals). 

It follows from Theorem~\ref{truncated} that the number of $(n,l)$-alternating sign trapezoids that have their $1$-columns in positions $j_1,\ldots,j_n$ (recall that the $n$ leftmost columns are indexed with $-n,-n+1,\ldots,-1$ and the $n$ rightmost columns are indexed with $1,2,\ldots,n$) is 
\begin{equation}
\label{operator}
\left. \left( \prod_{i=1}^{m} (-\fd_{x_i})^{-j_i-1} \prod_{i=m+1}^{n} \bd_{x_i}^{j_i-1}  \right) M_n(\mathbf{x}) \right|_{\mathbf{x}=(j_1,\ldots,j_m,j_{m+1}+l-3,\ldots,j_n+l-3)}.
\end{equation}
As $M_n(\mathbf{x})$ is a polynomial in each $x_i$, it follows that the expression is a polynomial in $l$, see also \cite{Aigner}. Interestingly, \eqref{operator} provides the values of this polynomial even if $l<2$.
Summing over all possible sequences $j_1,\ldots,j_n$ and all $m \in \{0,1,\ldots,n\}$, we obtain a polynomial $t_n(l)$ that gives the number of $(n,l)$-alternating sign trapezoids if $l \ge 2$. The case $l=1$ is also interesting and has been considered before.

 \begin{lem} 
 \label{qast}
 Let $n$ be a positive integer. Then $t_n(1)$ is the number of 
quasi alternating sign triangles of order $n$ as defined in \cite[Definition~2.3]{EXTREMEDASASM}.
\end{lem}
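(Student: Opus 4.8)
\emph{Proof sketch.} The plan is to re-run, for the degenerate value $l=1$, the chain of reasoning that produces $t_n(l)$ and to check that the objects it now counts are exactly the quasi alternating sign triangles. Since $M_n(\mathbf{x})$ is a polynomial in each coordinate and the range of the summation indices $m$ and $j_1 < \cdots < j_n$ does not involve $l$, the quantity $t_n(l)$ is a polynomial in $l$; hence $t_n(1)$ is obtained by literally substituting $l=1$ into each summand \eqref{operator} and adding the results.

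First I would confirm that each summand still carries a combinatorial meaning at $l=1$. Fix $m$ and an admissible sequence $-n \le j_1 < \cdots < j_m < 0 < j_{m+1} < \cdots < j_n \le n$. The operators and the sequences $\mathbf{s}=(-j_1-1,\ldots,-j_m-1)$, $\mathbf{t}=(j_{m+1}-1,\ldots,j_n-1)$ are independent of $l$; only the evaluation point changes, the prescribed bottom SE-entries becoming $j_i+l-3=j_i-2$. The single hypothesis of Theorem~\ref{truncated} that involves $l$ is that the bottom sequence $(j_1,\ldots,j_m,j_{m+1}-2,\ldots,j_n-2)$ be weakly increasing; since $j_m\le -1$ while $j_{m+1}-2\ge -1$, this remains true. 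Thus each summand at $l=1$ equals the number of $(\mathbf{s},\mathbf{t})$-trees of order $n$ with these prescribed bottom entries.

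Next I would invert the transformation from the text on the degenerate triangular shape with rows of lengths $2n-1,2n-3,\ldots,1$: from such an $(\mathbf{s},\mathbf{t})$-tree reconstruct a $0/1$-filling and take column differences to recover an array with entries in $\{-1,0,1\}$. The monotonicity conditions of an $(\mathbf{s},\mathbf{t})$-tree translate into the alternation condition, the topmost-nonzero-is-one condition, and the row-sum condition of a quasi alternating sign triangle exactly as in the case $l\ge 2$, while condition (4) in the trapezoid definition is vacuous here (there are no middle columns), matching the stated definition of $(n,1)$-alternating sign trapezoids.

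The main obstacle is the single central column. For $l\ge 2$ the $n$ leftmost and the $n$ rightmost columns are disjoint, separated by the middle $l-2$ columns whose sums are forced to be $0$; at $l=1$ these two blocks overlap in the central column, indexed both $-1$ (innermost left) and $+1$ (innermost right). Correspondingly, $b_m$ and $b_{m+1}$ may now coincide: taking $j_m=-1$ and $j_{m+1}=1$ gives $b_m=b_{m+1}=-1$, so the bottom row of the tree is only weakly increasing there. Because these are bottom entries they are not \emph{regular}, hence are exempt from the distinctness condition and are legitimately counted; I expect the crux of the proof to be verifying that such trees correspond bijectively to the quasi alternating sign triangles whose bottom-row sum equals $0$, whereas the strictly increasing case $b_m < b_{m+1}$ yields bottom-row sum $1$ (note that for $l\ge 2$ the junction is always strict, consistent with genuine trapezoids). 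Establishing that summing over all $(m,j_1,\ldots,j_n)$ reproduces every quasi alternating sign triangle exactly once — with this $0/1$ bottom-row dichotomy correctly encoded by the weak-versus-strict inequality at the junction — is the delicate bookkeeping (the same phenomenon underlying the $(P+Q-1)$ factor in \eqref{quasiweight}) on which I would concentrate the argument.
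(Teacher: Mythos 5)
Your overall strategy --- translate quasi alternating sign triangles into $(\mathbf{s},\mathbf{t})$-trees exactly as for trapezoids and concentrate on what happens at the junction between $j_m$ and $j_{m+1}$ --- is the paper's strategy, and your verification that the evaluation point $(j_1,\ldots,j_m,j_{m+1}-2,\ldots,j_n-2)$ remains weakly increasing is correct. But there is a genuine gap: the sum defining $t_n(1)$ runs over \emph{all} admissible $(m,j_1,\ldots,j_n)$, including those with $j_m<-1$ \emph{and} $j_{m+1}>1$ simultaneously, and these summands do not correspond to any quasi alternating sign triangle (a quasi AST never has $n$ many $1$-columns all avoiding the centre, since a bottom-row sum of $1$ forces the central column to be a $1$-column). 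Your sketch never addresses these terms, so nothing prevents them from contaminating the count. The paper disposes of them with a separate algebraic argument: when $j_m<-1$ and $j_{m+1}>1$ every factor $(-\fd_{x_i})^{-j_i-1}$ and $\bd_{x_i}^{j_i-1}$ carries a positive exponent, hence the operator contains $\prod_{i=1}^{n}\bd_{x_i}$ (up to shifts and signs), which annihilates $M_n(\mathbf{x})$ because $\prod_{i=1}^{n}\bd_{x_i}\prod_{1\le i<j\le n}\frac{x_j-x_i}{j-i}=0$. This vanishing is the one step of the lemma that is not a routine rerun of the $l\ge 2$ argument, and it is missing from your proposal.

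Moreover, the dichotomy you propose for the remaining cases is incorrect. You predict that a weak junction ($b_m=b_{m+1}$, i.e.\ $j_m=-1$, $j_{m+1}=1$) corresponds to bottom-row sum $0$ and a strict junction to bottom-row sum $1$. In fact the correct correspondence is a trichotomy: $j_m<-1,\ j_{m+1}=1$ corresponds to the central column not being a $1$-column (bottom entry $0$, so bottom-row sum $0$); $j_m=-1,\ j_{m+1}>1$ to the central column being a $1$-column with bottom entry $1$ (bottom-row sum $1$); and $j_m=-1,\ j_{m+1}=1$ to the central column being a $1$-column with bottom entry $0$ (bottom-row sum $0$). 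The first of these has a strict junction ($b_m\le -2<-1=b_{m+1}$) yet bottom-row sum $0$, contradicting your proposed rule, so the ``delicate bookkeeping'' you defer would not close along the lines you indicate.
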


\begin{proof} This follows essentially by translating quasi alternating sign triangles into $(\mathbf{s},\mathbf{t})$-trees in a manner that is analogous to as it was done for alternating sign trapezoids above. More specifically, the case $j_m<-1, j_{m+1}=1$ corresponds to the case that the central column is not a $1$-column (in which case the bottom entry is of course $0$), the case $j_m=-1, j_{m+1}>1$ corresponds to the case 
that the central column is a $1$-column and the bottom entry is $1$, while the case $j_m=-1, j_{m+1} =1$ corresponds to the case that the central column is a $1$-column and the bottom entry is $0$. If $j_m<-1$ and $j_{m+1}>1$, then 
$$
\left( \prod_{i=1}^{m} (-\fd_{x_i})^{-j_i-1} \prod_{i=m+1}^{n} \bd_{x_i}^{j_i-1}\right) M_n(\mathbf{x})=0
$$
as already 
$$
\left( \prod_{i=1}^{m} (-\fd_{x_i}) \prod_{i=m+1}^{n} \bd_{x_i} \right) M_n(\mathbf{x}) = (-1)^{m} \prod_{i=1}^{m} \e_{x_i} \prod_{i=1}^{n} \bd_{x_i} 
M_n(\mathbf{x}) = 0
$$
which follows from the definition of $M_n(\mathbf{x})$ and $\prod\limits_{i=1}^{n} \bd_{x_i} \prod\limits_{1 \le i < j \le n} \frac{x_j-x_i}{j-i}=0$.
\end{proof} 

In \cite[Theorem~1.3]{EXTREMEDASASM} it was shown that there is same number of quasi alternating sign triangles with $n$ rows as there is of cyclically symmetric plane partitions in an $n \times n \times n$ box, where Andrews \cite{AndrewsMacdonald} has proven that the latter are enumerated by a simple product formula.

The enumeration can be refined as follows: The generating function of $(n,l)$-alternating sign trapezoids with respect to the weight
\begin{equation}
\label{PQ}
P^{\text{$\#$ of $10$-columns left of the center}} Q^{\text{$\#$ of $10$-columns right of the center}}
\end{equation}
is 
\begin{equation}
\label{operatorPQ}
\left. \left( \prod_{i=1}^{m} \e_{x_i} (1- P \bd_{x_i}) (-\fd_{x_i})^{-j_i-1} \prod_{i=m+1}^{n} 
\e_{x_i}^{-1} (1+ Q \fd_{x_i}) \bd_{x_i}^{j_i-1}  \right) M_n(\mathbf{x}) \right|_{\mathbf{x}=(j_1,\ldots,j_m,j_{m+1}+l-3,\ldots,j_n+l-3)}
\end{equation}
if $l \ge 2$. For alternating sign triangles, this is explained in the proof of Theorem~5 in \cite{CTAST} and the argument can be extended easily to alternating sign trapezoids.

It can be checked that the weight in \eqref{PQ} has to be replaced by 
\begin{multline*}
P^{\# \text{ of $10$-columns among the $n-1$ leftmost columns}} \, 
 Q^{\# \text{ of $10$-columns among the $n-1$ rightmost columns}} \\
\times (P+Q-1)^{[\text{the central column is a $10$-column}]} 
\end{multline*}
if $l=1$ and this implies \eqref{quasiweight}.

\section{A symmetrizer formula for the generating function of alternating sign trapezoids with prescribed positions of the $1$-columns}
\label{symmetrizerSec}

In the following, it will be more convenient to work with an alternative formula for 
$M_n(\mathbf{x})$ (and not with the one provided in \eqref{operator}). In order to state it, we need to define the \emph{symmetrizer} $\sym$ and the \emph{antisymmetrizer} $\asym$
of a function $F(Y_1,\ldots,Y_n)$ with respect to the variables $Y_1,\ldots,Y_n$:
\begin{align*}
\sym_{Y_1,\ldots,Y_n} F(Y_1,\ldots,Y_n) &= 
\sum_{\sigma \in {\mathcal S}_n} F(Y_{\sigma(1)},\ldots,Y_{\sigma(n)}) \\
\asym_{Y_1,\ldots,Y_n} F(Y_1,\ldots,Y_n) &= 
\sum_{\sigma \in {\mathcal S}_n} \sgn \sigma F(Y_{\sigma(1)},\ldots,Y_{\sigma(n)})
\end{align*}
Moreover, the constant term of a formal Laurent series $F(Y_1,\ldots,Y_n)$ is denoted by $\ct_{Y_1,\ldots,Y_n} F(Y_1,\ldots,Y_n)$.
Then, as proved in \cite[Prop. 10.1]{VSASMs},
\begin{equation}
\label{asymM}
\begin{aligned}
M_n(\mathbf{x}) 
&= \ct_{Y_1,\ldots,Y_n} \sym_{Y_1,\ldots,Y_n} 
\left[ \prod_{i=1}^{n} (1+Y_i)^{x_i} \prod_{1 \le i < j \le n} (1+Y_j + Y_i Y_j)  (Y_j-Y_i)^{-1} \right] \\
&= \ct_{Y_1,\ldots,Y_n} \asym_{Y_1,\ldots,Y_n} 
\left[ \prod_{i=1}^{n} (1+Y_i)^{x_i} \prod_{1 \le i < j \le n} (1+Y_j + Y_i Y_j) \right] \prod_{1 \le i < j \le n} (Y_j-Y_i)^{-1}.
\end{aligned}
\end{equation}
Note that 
$$\asym_{Y_1,\ldots,Y_n} 
\left[ \prod\limits_{i=1}^{n} (1+Y_i)^{x_i} \prod\limits_{1 \le i < j \le n} (1+Y_j + Y_i Y_j) \right] \prod_{1 \le i<j \le n} (Y_j-Y_i)^{-1}
$$ is in fact a polynomial in $Y_1,\ldots,Y_n$ since every antisymmetric polynomial in $Y_1,\ldots,Y_n$ is divisible by 
$\prod\limits_{1 \le i < j \le n} (Y_j-Y_i)$, and so taking the constant term means here that we actually simply need to evaluate at $\mathbf{Y}=0$. 

From \eqref{operator} it follows that we need to apply 
$$
 \prod_{i=1}^{m} \e_{x_i}^{j_i} (-\fd_{x_i})^{-j_i-1} \prod_{i=m+1}^{n} \e_{x_i}^{j_i+l-3} \bd_{x_i}^{j_i-1}   =  \prod_{i=1}^{m} \e_{x_i}^{-1} (-\bd_{x_i})^{-j_i-1} 
 \prod_{i=m+1}^{n} \e_{x_i}^{l-2} \fd_{x_i}^{j_i-1} 
$$ 
to $M_n(\mathbf{x})$ and then evaluate at $\mathbf{x}=0$ to obtain the number of 
$(n,l)$-alternating sign trapezoids with prescribed positions of the $1$-columns (where we index the columns as described above). 
We study the effect of $- \bd_{x_i}$ and of $\fd_{x_i}$ when applied to the rational function in \eqref{asymM} to which we need to apply the symmetrizer: the effect of $- \bd_{x_i}$ is the same as multiplying with $\frac{-Y_i}{1+Y_i}$ as 
$$
(1+Y_i)^{x_i-1} - (1+Y_i)^{x_i} = \frac{-Y_i}{1+Y_i} (1+Y_i)^{x_i}, 
$$
while the effect of $\fd_{x_i}$ is the same as multiplying with $Y_i$ as 
$$
(1+Y_i)^{x_i+1} - (1+Y_i)^{x_i} = Y_i (1+Y_i)^{x_i}.
$$ The effect of 
$\e_{x_i}$ is clearly the multiplication with $1+Y_i$, and so we need to apply the symmetrizer to 
\begin{equation}
\label{arg1}
\prod_{i=1}^m (-Y_i)^{-j_i-1} (1+Y_i)^{j_i} \prod_{i=m+1}^n Y_i^{j_i-1} (1+Y_i)^{l-2} 
\prod_{1 \le i < j \le n} (1+Y_j + Y_i Y_j) (Y_j-Y_i)^{-1}.
\end{equation}
Observe that in order to compute the generating function given in \eqref{operatorPQ}, 
one has to multiply with 
\begin{equation}
\label{PQweight}
\prod_{i=1}^{m} (1+Y_i - P Y_i) \prod_{i=m+1}^{n} \frac{1+Q Y_i}{1+Y_i}.
\end{equation}

To compute the total number of $(n,l)$-alternating sign trapezoids, we need to sum over all $-n \le j_1 < j_2 < \ldots < j_m < 0 < j_{m+1} < \ldots < j_n \le n$ and then over all $m \in \{0,1,\ldots,n\}$. However, as the number is zero if $j_1<-n$ or 
$j_n > n$ (it can be checked fairly easily that the polynomial has no constant term in this case), we do not have to take into account the lower bound nor the upper bound.

To simplify the computation, we replace for the moment $Y_i$ by $\frac{-Z_i}{1+Z_i}$ for $i \le m$. After this transformation, \eqref{arg1} is equal to 
\begin{multline}
\label{arg2}
\prod_{i=1}^m Z_i^{-j_i-1} (1+Z_i) \prod_{i=m+1}^n Y_i^{j_i-1} (1+Y_i)^{l-2} 
\prod_{1 \le i < j \le m} (1+Z_i + Z_i Z_j) (Z_i-Z_j)^{-1} \\
\times \prod_{m+1 \le i < j \le n} (1+Y_j + Y_i Y_j) (Y_j-Y_i)^{-1} 
\prod_{i=1}^{m} \prod_{j=m+1}^{n}  (1+Z_i+Y_j) (Z_i+Y_j+Z_i Y_j)^{-1}.
\end{multline}
Summing over all $j_i$ and using the following identity 
$$\sum_{0 \le i_1 < i_2 < \ldots < i_k} X_1^{i_1} X_2^{i_2} \ldots X_k^{i_k} = \prod_{i=1}^k 
X_i^{i-1} \left( 1 - \prod\limits_{j=i}^k X_j \right)^{-1},$$
we obtain 
\begin{multline}
\label{arg}
\prod_{i=1}^m Z_i^{m-i} (1+Z_i) \left(1 - \prod\limits_{j=1}^{i} Z_j \right)^{-1}
 \prod_{i=m+1}^n  Y_i^{i-m-1}(1+Y_i)^{l-2} \left(1- \prod\limits_{j=i}^m Y_j \right)^{-1} \\ \times 
\prod_{1 \le i < j \le m} (1+Z_i + Z_i Z_j) (Z_i-Z_j)^{-1} 
 \prod_{m+1 \le i < j \le n} (1+Y_j + Y_i Y_j) (Y_j-Y_i)^{-1} \\
\times \prod_{i=1}^{m} \prod_{j=m+1}^{n}  (1+Z_i+Y_j) (Z_i+Y_j+Z_i Y_j)^{-1}.
\end{multline}

\section{Applying the symmetrizer to $Y_1,\ldots,Y_{m}$ and to 
$Y_{m+1},\ldots,Y_n$ seperately}
\label{seperately}

We will compute the symmetrizer in two steps. In order to explain this, we need another definition: Suppose $F(Y_1,\ldots,Y_n)$ is a function and $0 \le m \le n$, and denote by ${\mathcal S}_{n}^m$ the subset of the set ${\mathcal S}_{n}$ of all permutations of $\{1,\ldots,n\}$  that contains the permutations with $\sigma(i)<\sigma(j)$ for all $1 \le i < j \le m$  and all $m+1 < i < j \le n$ (these permutations clearly correspond to the subsets of $\{1,2,\ldots,n\}$ with $m$ elements). Then we define 
$$
\subset_{Y_1,\ldots,Y_m}^{Y_{m+1},\ldots,Y_n}  F(Y_1,\ldots,Y_n) = \sum_{\sigma \in {\mathcal S}_{n}^m} F(Y_{\sigma(1)},\ldots,Y_{\sigma(n)}).
$$
Hence
$$
\sym_{Y_1,\dots,Y_n} F(Y_1,\ldots,Y_n) = \subset_{Y_1,\ldots,Y_m}^{Y_{m+1},\ldots,Y_n} \sym_{Y_{1},\ldots,Y_m}  \sym_{Y_{m+1},\ldots,Y_{n}} F(Y_1,\ldots,Y_n).
$$

Now we apply $\sym_{Y_1,\ldots,Y_m}$ (or, equivalently, $\sym_{Z_1,\ldots,Z_m}$) and $\sym_{Y_{m+1},\ldots,Y_n}$ to \eqref{arg}, and in order to do so we 
use the following lemma from \cite{CTAST}.
\begin{lem} Let $n \ge 1$. Then 
\begin{multline}
\label{asym}
\asym_{X_1,\ldots,X_n} \left[ \prod_{1 \le i < j \le n} (1+X_j+X_i X_j) \prod_{i=1}^{n} X_i^{i-1} \left(1-\prod_{j=i}^{n} X_j\right)^{-1} \right] \\
= \prod_{i=1}^{n} (1-X_i)^{-1} \prod_{1 \le i < j \le n} 
\frac{(1+X_i + X_j)(X_j-X_i)}{1-X_i X_j}.
\end{multline}
\end{lem}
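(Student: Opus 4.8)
The plan is to verify this as an identity of rational functions. Both sides are antisymmetric in $X_1,\ldots,X_n$: the left-hand side because it is an antisymmetrizer, and the right-hand side because $\prod_{1\le i<j\le n}(X_j-X_i)$ is its only antisymmetric ingredient, while $\prod_{i}(1-X_i)^{-1}$ and $\prod_{i<j}\frac{1+X_i+X_j}{1-X_iX_j}$ are symmetric. Hence both sides are divisible (as rational functions) by the Vandermonde product $\prod_{1\le i<j\le n}(X_j-X_i)$, and it is equivalent — and cleaner — to clear denominators: I would first argue that the left-hand side is a rational function whose poles lie only along the hypersurfaces $X_i=1$ and $X_iX_j=1$, and then establish the polynomial identity
\begin{equation*}
\prod_{i=1}^n(1-X_i)\prod_{1\le i<j\le n}(1-X_iX_j)\cdot[\text{left-hand side of \eqref{asym}}]=\prod_{1\le i<j\le n}(1+X_i+X_j)(X_j-X_i).
\end{equation*}
The base case $n=1$ reads $(1-X_1)^{-1}=(1-X_1)^{-1}$ and is immediate, which sets up an induction on $n$.

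The starting point for the induction is the observation, already used to derive \eqref{arg}, that $\prod_{i=1}^n X_i^{i-1}\bigl(1-\prod_{j=i}^n X_j\bigr)^{-1}=\sum_{0\le a_1<a_2<\cdots<a_n}\prod_{i=1}^n X_i^{a_i}$, so that the left-hand side equals
\begin{equation*}
\asym_{X_1,\ldots,X_n}\Bigl[\prod_{1\le i<j\le n}(1+X_j+X_iX_j)\sum_{0\le a_1<\cdots<a_n}\prod_{i=1}^n X_i^{a_i}\Bigr].
\end{equation*}
This exhibits the only source of poles: the geometric summations produce factors $\bigl(1-\prod_{j\ge m}X_j\bigr)^{-1}$, i.e.\ poles along $\prod_{j=m}^n X_j=1$. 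I would then peel off one variable by a partial-fraction (residue) computation in it, reducing the $n$-variable antisymmetrizer to the $(n-1)$-variable one to which the induction hypothesis applies. Along the way one must check that, after antisymmetrization, all the ``long'' poles $\prod(\ge 3\text{ variables})=1$ cancel, leaving only the simple poles at $X_i=1$ and $X_iX_j=1$ that appear on the right, and that the surviving residues match the claimed product.

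The main obstacle is that the factor $\prod_{i<j}(1+X_j+X_iX_j)$ is \emph{not} symmetric, so the antisymmetrizer does not collapse to a mere relabeling of the summation index $\mathbf a$; its interaction with each permutation has to be tracked carefully, and this is precisely what makes the cancellation of the spurious long poles delicate. As a cross-check, and as an alternative to the residue bookkeeping, I would exploit that the symmetric cofactor $\prod_i(1-X_i)^{-1}\prod_{i<j}(1-X_iX_j)^{-1}$ on the right is of Cauchy type, so that the target can be recognized as a Cauchy-type determinant evaluation; comparing this determinant against the antisymmetrizer for small $n$ confirms the normalization and guides the residue computation.
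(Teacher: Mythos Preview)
The paper does not prove this lemma; it simply quotes it from \cite{CTAST}. So there is no in-paper argument to compare your proposal against, and your attempt must stand on its own.

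As it stands, your proposal is a strategy sketch, not a proof. The observations you make are correct: both sides are antisymmetric, the base case $n=1$ is trivial, and the rewriting of $\prod_i X_i^{i-1}\bigl(1-\prod_{j\ge i}X_j\bigr)^{-1}$ as $\sum_{0\le a_1<\cdots<a_n}\prod X_i^{a_i}$ is valid. But the substance of the argument --- the inductive step --- is never carried out. You write that you ``would then peel off one variable by a partial-fraction (residue) computation,'' that ``one must check that\ldots all the `long' poles cancel,'' and that this ``has to be tracked carefully'' and is ``precisely what makes the cancellation\ldots delicate.'' These are descriptions of work to be done, not work done. The Cauchy-determinant remark at the end is likewise only a heuristic cross-check (``for small $n$''), not an argument.

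The gap is therefore the entire heart of the proof. Concretely: to make the residue/induction route work you would need to (i) isolate the dependence on $X_n$, (ii) compute the residues of the antisymmetrized expression at each candidate pole, (iii) show that the residues along $\prod_{j=m}^n X_j=1$ for $m\le n-2$ vanish after antisymmetrization (this is the ``long pole'' cancellation you allude to), and (iv) match the surviving residues at $X_n=1$ and $X_iX_n=1$ with those of the right-hand side, invoking the $(n-1)$-variable identity. None of these steps is automatic, and you have not supplied any of them. Alternatively, if you want to pursue the determinantal idea, you would need to actually exhibit the right-hand side as a specific determinant and expand it as a signed sum over permutations that matches the antisymmetrizer on the left --- again, this is only gestured at.
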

This gives 
$$
\prod_{i=1}^m \frac{1+Z_i}{1-Z_i}
 \prod_{i=m+1}^n  \frac{(1+Y_i)^{l-2}}{1-Y_i}
\prod_{1 \le i < j \le m} \frac{1+Z_i +  Z_j}{1-Z_i Z_j} 
\prod_{m+1 \le i < j \le n} \frac{1+Y_i + Y_j}{1-Y_i Y_j} 
\prod_{i=1}^{m} \prod_{j=m+1}^{n}  \frac{1+Z_i+Y_j}{Z_i+Y_j+Z_i Y_j}.
$$
For $i \le m$, we transform back to the original variables $Y_i$, that is we replace $Z_i$ by $-\frac{Y_i}{1+Y_i}$ for $i \le m$. We obtain 
$$
\prod_{i=1}^m \frac{1}{1+2 Y_i}
 \prod_{i=m+1}^n  \frac{(1+Y_i)^{l-2}}{1-Y_i}
\prod_{1 \le i < j \le m} \frac{1-Y_i Y_j}{1+ Y_i +  Y_j} 
\prod_{m+1 \le i < j \le n} \frac{1+Y_i + Y_j}{1-Y_i Y_j} 
\prod_{i=1}^{m} \prod_{j=m+1}^{n}  \frac{1+Y_j+ Y_i Y_j}{Y_j-Y_i}.
$$

\section{Deriving a multivariate determinant}
\label{cauchy}

We need to compute the evaluation at $\mathbf{Y}=0$ of 
\begin{multline*}
 \subset_{Y_1,\ldots,Y_m}^{Y_{m+1},\ldots,Y_n}  
 \prod_{i=1}^{m} \frac{1}{1+2 Y_i} \prod_{i=m+1}^{n} \frac{(1+Y_i)^{l-2}}{1-Y_i} \\
\times \prod_{1 \le i < j \le m} \frac{1-Y_i Y_j}{1+Y_i + Y_j} \prod_{m+1 \le i < j \le n} \frac{1+Y_i+ Y_j}{1-Y_i  Y_j} \prod_{i=1}^{m} \prod_{i=m+1}^{n} \frac{1+Y_j+Y_i Y_j}{Y_j-Y_i}.
\end{multline*}
We can divide the rational function to which we need to apply the subset operator by $\prod_{1 \le i < j \le n} (1+Y_i+Y_j)(1-Y_i Y_j)$ (since it is symmetric and thus invariant under the application of $\subset_{Y_1,\ldots,Y_m}^{Y_{m+1},\ldots,Y_n}$, and has constant term $1$) and 
obtain 
\begin{multline}
\label{c1}
 \prod_{i=1}^{m} \frac{1}{1+2 Y_i} \prod_{i=m+1}^{n} \frac{(1+Y_i)^{l-2}}{1-Y_i} \\
\times \prod_{1 \le i < j \le m} \frac{1}{(1+Y_i + Y_j)^2} \prod_{m+1 \le i < j \le n} \frac{1}{(1-Y_i  Y_j)^2} \prod_{i=1}^{m} \prod_{j=m+1}^{n} \frac{1+Y_j+Y_i Y_j}{(Y_j-Y_i)(1+Y_i+Y_j)(1-Y_i Y_j)}  \\
=  \prod_{i=m+1}^{n} (1+Y_i)^{l-1}  \prod_{i,j=1}^{m}  \frac{1}{1+Y_i + Y_j} \prod_{i,j=m+1}^{n} \frac{1}{1-Y_i  Y_j} \prod_{i=1}^{m} \prod_{j=m+1}^{n} \frac{1+Y_j+Y_i Y_j}{(Y_j-Y_i)(1+Y_i+Y_j)(1-Y_i Y_j)}.
\end{multline}
Now we use the Cauchy determinant.
$$
\det_{1 \le i, j \le n} \left( \frac{1}{X_i+Y_j} \right) = \frac{\prod\limits_{1 \le i < j \le n} (X_j-X_i)(Y_j-Y_i)}{\prod\limits_{i,j=1}^{n} (X_i+Y_j)}
$$
Setting $X_i=1+Y_i$ if $i \le m$, and $X_i=-\frac{1}{Y_i}$ if $i>m$, we obtain 
\begin{multline*}
\det_{1 \le i, j \le n} \left( \frac{1}{X_i+Y_j} \right) 
= (-1)^{m-n} \prod_{i=m+1}^n Y_i  \prod_{i,j=1}^{m} \frac{1}{1+Y_i+Y_j}\prod_{i,j=m+1}^{n} \frac{1}{1- Y_i Y_j} \\
\times \prod_{i=1}^{m} \prod_{j=m+1}^{n} \frac{(Y_j-Y_i)(1+Y_j + Y_i Y_j)}{(1+Y_i+Y_j)(1-Y_i Y_j)} \prod_{1 \le i < j \le m} (Y_j-Y_i)^2 
\prod_{m+1 \le i < j \le n} (Y_j-Y_i)^2 
\end{multline*}
It follows that 
\begin{multline*}
\frac{ \det_{1 \le i, j \le n} \left( \begin{cases} \frac{1}{1+Y_i + Y_j}, & i \le m \\ \frac{(1+Y_i)^{l-1}}{1-Y_i Y_j}, & i >m \end{cases} \right)}
{\prod\limits_{1 \le i < j \le n} (Y_j - Y_i)^2} \\
=  \prod_{i=m+1}^{n} (1+Y_i)^{l-1}  \prod_{i,j=1}^{m} \frac{1}{1+Y_i+Y_j}\prod_{i,j=m+1}^{n} \frac{1}{1- Y_i Y_j} 
\prod_{i=1}^{m} \prod_{j=m+1}^{n} \frac{1+Y_j + Y_i Y_j}{(Y_j-Y_i)(1+Y_i+Y_j)(1-Y_i Y_j)}.
\end{multline*}
The right hand side is equal to the right hand side of \eqref{c1}.
The following observation is crucial: Suppose $\sigma \in {\mathcal S}_{n}^{m}$, then, by the antisymmetry of the determinant in rows and columns, 
\begin{multline*}
\sigma \left[  \prod_{i=m+1}^{n} (1+Y_i)^{l-1}  \prod_{i,j=1}^{m} \frac{1}{1+Y_i+Y_j}\prod_{i,j=m+1}^{n} \frac{1}{1- Y_i Y_j} 
\prod_{i=1}^{m} \prod_{j=m+1}^{n} \frac{1+Y_j + Y_i Y_j}{(Y_j-Y_i)(1+Y_i+Y_j)(1-Y_i Y_j)} \right] \\
= \frac{ \det\limits_{1 \le i, j \le n} \left( \begin{cases} \frac{1}{1+Y_i + Y_j}, & i \in \sigma(\{1,\ldots,m\}) \\ \frac{(1+Y_i)^{l-1}}{1-Y_i Y_j}, & i \in \sigma(\{m+1,\ldots,n\}) \end{cases} \right)}
{\prod\limits_{1 \le i < j \le n} (Y_j - Y_i)^2}.
\end{multline*}
Applying $\subset_{Y_1,\ldots,Y_m}^{Y_{m+1},\ldots,Y_n}$ and summing over all $m$ gives 
$$
\frac{ \det\limits_{1 \le i, j \le n} \left( \frac{1}{1+Y_i + Y_j}+ \frac{(1+Y_i)^{l-1}}{1-Y_i Y_j} \right)}
{\prod\limits_{1 \le i < j \le n} (Y_j - Y_i)^2},
$$
where we have used the linearity in each row.
We can also keep track of $m$ as the exponent of $R$ as follows
$$
\frac{ \det\limits_{1 \le i, j \le n} \left( R \frac{1}{1+Y_i + Y_j} +  \frac{(1+Y_i)^{l-1}}{1-Y_i Y_j} \right)}
{\prod\limits_{1 \le i < j \le n} (Y_j - Y_i)^2}.
$$
In order to involve the $P$-weight and the $Q$-weight, we have to use
$$
\frac{ \det\limits_{1 \le i, j \le n} \left( R \frac{1+Y_i - P Y_i}{1+Y_i + Y_j}+ \frac{(1+Y_i)^{l-2} (1+Q Y_i)}{1-Y_i Y_j} \right)}
{\prod\limits_{1 \le i < j \le n} (Y_j - Y_i)^2}, 
$$
see \eqref{PQweight}.

\section{Deriving a binomial determinant}
\label{binomialSec}

Let 
$$
F(X,Y) = R \frac{1+X - P X}{1+X + Y}+ \frac{(1+X)^{l-2} (1+Q X)}{1-X Y}.
$$
and interpret it as a power series in $X$ and $Y$. 
By a formula that appeared in \cite[Eq (43)-(47)]{BehrendWeightedEnum}, 
we have
$$ \left. \frac{\det_{1 \le i, j \le n} \left( F(X_i,Y_j) \right)}{\prod_{1 \le i < j \le n} (X_j-X_i)(Y_j-Y_i)} \right|_{\mathbf{X}=\mathbf{Y}=0} = 
\det_{0 \le i,j \le n-1} ( [X^i Y^j] F(X,Y) ).
$$ 
Now 
\begin{multline*}
F(X,Y) 
= R \sum_{i,j \ge 0} (-1)^{i+j}  \left(\binom{i+j}{i} + (P-1) \binom{i+j-1}{i-1} \right)  X^{i} Y^j 
 \\ + \sum_{j,q \ge 0} \left(\binom{l-2}{q} + Q \binom{l-2}{q-1} \right) X^{j+q} Y^j, \end{multline*} 
 and so, using $\binom{n}{k} = (-1)^k \binom{k-n-1}{k}$ and 
 $\binom{n}{k} = \binom{n-1}{k} + \binom{n-1}{k-1}$, 
$$
[X^i Y^j] F(X,Y) 
= R  (-1)^{j} \left( \binom{-j}{i} - P \binom{-j-1}{i-1} \right)  +  \left( \binom{l-2}{i-j} + Q \binom{l-2}{i-j-1}. 
 \right)
$$
We multiply the matrix on the left with the matrix 
$(\binom{-l+2}{i-j})_{0 \le i, j \le n-1}$ (which has determinant $1$) and obtain, using the Chu-Vandermonde summation and $\binom{n}{k} = (-1)^k \binom{k-n-1}{k}$,
\begin{multline*}
R (-1)^{j} \left( \binom{-l-j+2}{i} - P \binom{-l-j+1}{i-1} \right)  +  \delta_{i,j}    + Q \delta_{i,j+1} \\
= R (-1)^{i+j} \left( \binom{i+j+l-3}{i} + P \binom{i+j+l-3}{i-1} \right)  +  \delta_{i,j}    + Q \delta_{i,j+1}.
\end{multline*} 
The determinant is unchanged when we multiply the $j$-th column by $(-1)^j$ and the $i$-th column by $(-1)^i$, $i,j=0,1,\ldots,n-1$, and so the matrix entry can 
clearly be replaced by
$$
R \left(\binom{i+j+l-3}{i} + P \binom{i+j+l-3}{i-1} \right)   +   \delta_{i,j}    - Q \delta_{i,j+1}.
$$
Let 
$$
K(n) = (\delta_{i,j}    - Q \delta_{i,j+1})_{ 0 \le i,j \le n-1}.
$$
Then 
$$
K(n)^{-1} = \begin{cases} Q^{i-j} & \text{if $i \ge j$} \\ 
                                              0 & \text{if $i < j$} \end{cases}.
$$   
Finally, we multiply the matrix underlying the determinant on the left with $K(n)^{-1}$, and obtain 
\begin{equation}
\label{det}
\det_{0 \le i,j \le n-1} \left( R \sum_{k=0}^{i} Q^{i-k} \left( \binom{k+j+l-3}{k} + P \binom{k+j+l-3}{k-1} \right) +  \delta_{i,j} \right).
\end{equation}
For $P=Q=R=1$, the matrix entry simplifies to
$$
\binom{i+j+l-1}{i} + \delta_{i,j}.
$$  
This determinant is known to enumerate column strict shifted plane partition of class $l-1$ and with at most $n$ parts in row $1$, see \cite{AndrewsMacdonald}. This can be shown using the Lindstr\"om-Gessel-Viennot theorem \cite{Lindstr,GesselViennot}, which will also be used in the next section to interpret the more general determinant.

\section{Interpreting the determinant as the generating function of families of non-intersecting lattice paths and column strict shifted plane partitions}  
\label{InterpretSec}  

We first consider the case $P=1$. Then \eqref{det} is equal to
\begin{equation}
\label{P1}
\sum_{r=0}^{n} R^r \sum_{0 \le u_1 < u_2 < \ldots < u_{r} \le n-1} \det_{1 \le i,j \le r} \left( \sum_{k=0}^{u_i} Q^{u_i-k}  \binom{k+u_j+l-2}{k}    \right).
\end{equation}
The binomial coefficient $\binom{k+u_j+l-2}{k}$ is the number of lattice paths starting in $(k,1)$ and ending in $(0,u_j+l-1)$, allowing north unit steps and west unit steps. It follows that the matrix entry is the generating function of lattice paths starting in 
$(u_i,0)$ and ending $(0,u_j+l-1)$ with respect to the weight 
$$
Q^{\# \text{ of horizontal steps at height $0$}}.
$$
Hence, by the Lindstr\"om-Gessel-Viennot theorem \cite{Lindstr,GesselViennot}, \eqref{P1} is the generating function of 
families of non-intersecting lattice paths where the starting points are among $A_i=(i,0)$, $i=0,1,\ldots,n-1$, and the end points are among $B_j=(0,j+l-1)$, 
$j=0,1,\ldots,n-1$, such that $A_u$ is a starting point if and only if $B_u$ is an ending point, allowing north unit steps and west unit steps and where the weight is 
$$
R^{\# \text{ of paths}} Q^{\# \text{ of horizontal steps at height $0$}}.
$$ 
Such families are equivalent to column strict shifted plane partitions of class $l-1$ with at most $n$ parts in the top row: Each path corresponds to a row, and the parts of the plane partitions are just the heights of the horizontal steps increased by $1$ and where we add the increased height of the end point of the path at the beginning of the corresponding row of the plane partition.  The family of non-intersecting lattice paths that corresponds to
$$
\begin{array}{ccccc}
7 & 7 & 6 & 6 & 3 \\
  & 6 & 5 & 5 & 2 \\
  &   & 4 & 4 &   \\
  &   &   & 3  &  
\end{array}.
$$
is given in Figure \ref{CSSP}.

\begin{figure}
\scalebox{0.5}{
\psfrag{1}{\Large$1$}
\psfrag{2}{\Large$2$}
\psfrag{3}{\Large$3$}
\psfrag{4}{\Large$4$}
\psfrag{5}{\Large$5$}
\psfrag{6}{\Large$6$}
\psfrag{7}{\Large$7$}
\includegraphics{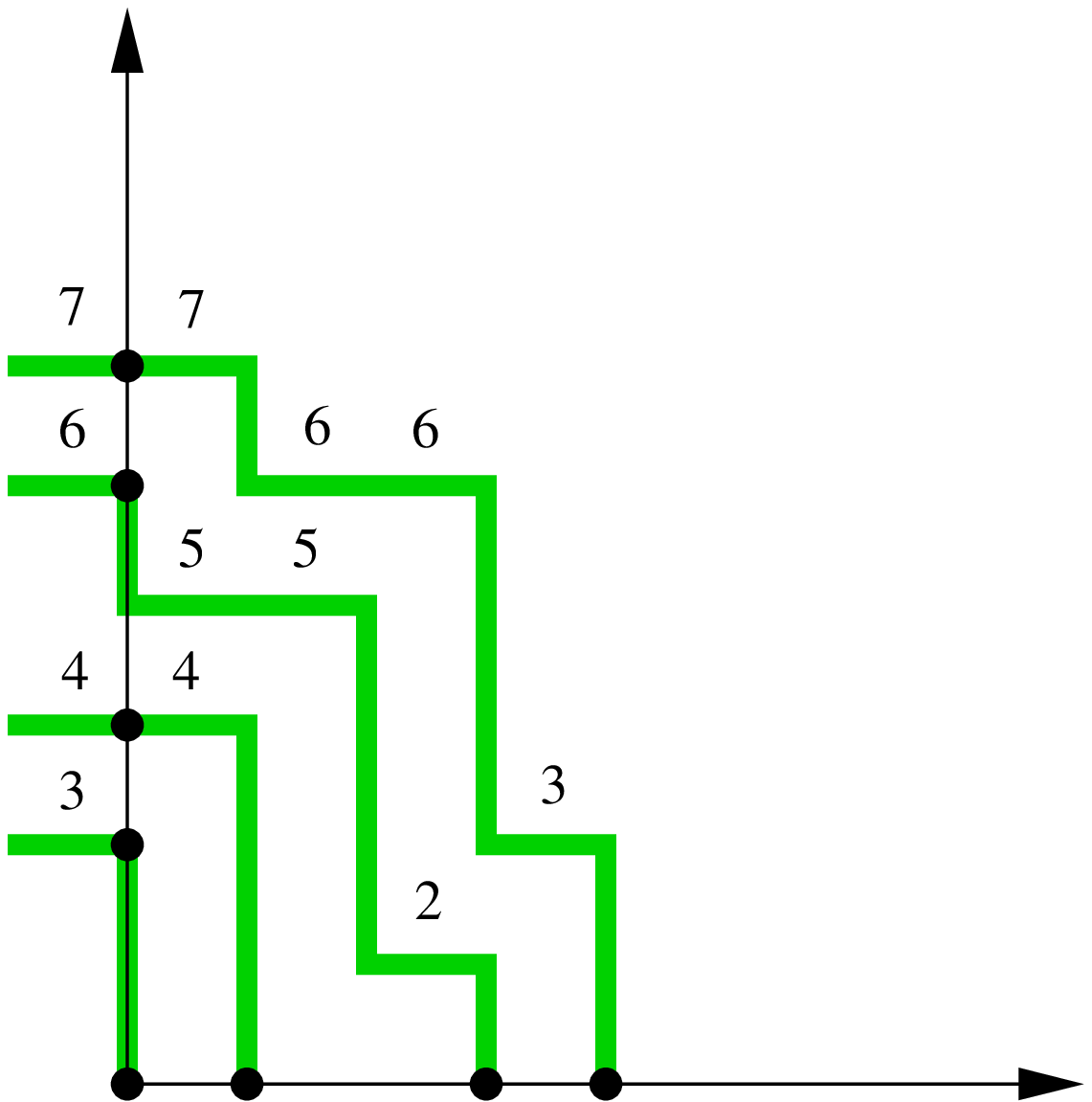}}
\caption{\label{CSSP}}
\end{figure}

We now consider the general case and assume $d \ge 1$ (which implies also $l \ge 2$). Note that 
$$
\sum_{k=0}^{i} Q^{i-k} \binom{k+j+l-3}{k} 
$$
is the generating function of lattice paths starting in $(i,0)$ and ending in $(0,j+l-1)$ with respect to the number of horizontal steps at height $0$ where the line $y=x+d$ is reached through a north step, while 
$$
\sum_{k=0}^{i} Q^{i-k} \ \binom{k+j+l-3}{k-1} 
$$  
is the generating function of lattice paths starting in $(i,0)$ and ending in $(0,j+l-1)$ where the line $y=x+d$ is reached through a west step. Employing again the Lindstr\"om-Gessel-Viennot theorem,
this concludes the proof of Theorem~\ref{main} for $d \ge 1$.

Now we assume $d=0$. Then it can be checked easily that 
$$
\sum_{k=0}^{i} Q^{i-k} \left( \binom{k+j+l-3}{k} + P \binom{k+j+l-3}{k-1} \right)
$$
is the generating function of lattice paths starting in $(i,0)$ and ending in 
$(0,j+l-1)$ with respect to the following weight.
\begin{multline*}
P^{[\text{the path reaches the main diagonal through a west step but not in the origin}]} \\
\times Q^{\# \text{ of horizontal steps on the $x$-axis not containing the origin}} \\
\times (P+Q-1)^{\text{[the path reaches the main diagonal in the origin]}}
\end{multline*}
This concludes the proof of Theorem~\ref{main} for $d=0$.


\begin{thebibliography}{BFZJ12}

\bibitem[ABF]{EXTREMEDASASM}
A.~Ayyer, R.~E. Behrend, and I.~Fischer.
\newblock Extreme diagonally and antidiagonally symmetric alternating sign
  matrices of odd order.
\newblock Preprint, arXiv:1611.03823.

\bibitem[Aig17]{Aigner}
F.~Aigner.
\newblock Refined enumerations of alternating sign triangles.
\newblock {\em S\'em. Lothar. Combin.}, 78B:Art. 60, 12, 2017.

\bibitem[And79]{AndrewsMacdonald}
G.~E. Andrews.
\newblock Plane partitions. {III}. {T}he weak {M}acdonald conjecture.
\newblock {\em Invent. Math.}, 53(3):193--225, 1979.

\bibitem[And94]{TSSCPP}
G.~E. Andrews.
\newblock Plane partitions. {V}. {T}he {TSSCPP} conjecture.
\newblock {\em J. Combin. Theory Ser. A}, 66(1):28--39, 1994.

\bibitem[BF]{trapezoids}
R.~E. Behrend and I.~Fischer.
\newblock {Alternating sign trapezoids and cyclically symmetric lozenge tilings
  of hexagons with a central triangular hole}.
\newblock In preparation.

\bibitem[BFZJ12]{BehrendWeightedEnum}
R.~E. Behrend, P.~Di Francesco, and P.~Zinn-Justin.
\newblock {On the weighted enumeration of alternating sign matrices and
  descending plane partitions}.
\newblock {\em J. Comb. Theory Ser. A}, 119:331--363, 2012.

\bibitem[CK00]{55}
M.~Ciucu and C.~Krattenthaler.
\newblock Plane partitions. {II}. {$5\frac 12$} symmetry classes.
\newblock In {\em Combinatorial methods in representation theory ({K}yoto,
  1998)}, volume~28 of {\em Adv. Stud. Pure Math.}, pages 81--101. Kinokuniya,
  Tokyo, 2000.

\bibitem[Fis]{CTAST}
I.~Fischer.
\newblock {Enumeration of alternating sign triangles using a constant term
  approach}.
\newblock Preprint, 2017, arXiv:1804.03630.

\bibitem[Fis06]{FischerNumberOfMT}
I.~Fischer.
\newblock {The number of monotone triangles with prescribed bottom row}.
\newblock {\em Adv. Appl. Math.}, 37:249--267, 2006.

\bibitem[Fis11]{FischerRefEnumASM}
I.~Fischer.
\newblock {Refined enumerations of alternating sign matrices: monotone
  $(d,m)$-trapezoids with prescribed top and bottom row}.
\newblock {\em J. Alg. Combin.}, 33:239 -- 257, 2011.

\bibitem[Fis14]{FischerESI}
I.~Fischer.
\newblock Extreme dadasms.
\newblock http://www.mat.univie.ac.at/\~kratt/esi3/Fischer.pdf, June 2014.

\bibitem[FR15]{VSASMs}
I.~Fischer and L.~Riegler.
\newblock {Vertically symmetric alternating sign matrices and a multivariate
  Laurent polynomial identity}.
\newblock {\em Elect. J. Combin.}, 22, 2015.

\bibitem[FZJ08]{FonZinn}
T.~Fonseca and P.~Zinn-Justin.
\newblock {On the doubly refined enumeration of alternating sign matrices and
  totally symmetric self-complementary plane partitions}.
\newblock {\em Elect. J. Combin.}, 15:35pp, 2008.

\bibitem[GV85]{GesselViennot}
I.~Gessel and G.~X. Viennot.
\newblock Binomial determinants, paths, and hook length formulae.
\newblock {\em Adv. Math.}, 58(3):300--321, 1985.

\bibitem[Kra06]{DPPKratt}
C.~Krattenthaler.
\newblock Descending plane partitions and rhombus tilings of a hexagon with a
  triangular hole.
\newblock {\em European J. Combin.}, 27(7):1138--1146, 2006.

\bibitem[Kup96]{KuperbergASMProof}
G.~Kuperberg.
\newblock {Another proof of the alternating sign matrix conjecture}.
\newblock {\em Int. Math. Res. Notices}, 3:139--150, 1996.

\bibitem[Lin73]{Lindstr}
B.~Lindstr\"om.
\newblock On the vector representations of induced matroids.
\newblock {\em Bull. London Math. Soc.}, 5:85--90, 1973.

\bibitem[MRR83]{DPPMRR}
W.~H. Mills, D.~P. Robbins, and H.~Rumsey, Jr.
\newblock Alternating sign matrices and descending plane partitions.
\newblock {\em J. Combin. Theory Ser. A}, 34(3):340--359, 1983.

\bibitem[MRR86]{MilRobRum86}
W.~H. Mills, D.~P. Robbins, and H.~Rumsey, Jr.
\newblock Self-complementary totally symmetric plane partitions.
\newblock 42(2):277--292, 1986.

\bibitem[MRR87]{MRR87}
W.~H. Mills, D.~P. Robbins, and H.~Rumsey, Jr.
\newblock Enumeration of a symmetry class of plane partitions.
\newblock {\em Discrete Math.}, 67(1):43--55, 1987.

\bibitem[RR86]{RobRum}
D.~P. Robbins and H.~Rumsey, Jr.
\newblock Determinants and alternating sign matrices.
\newblock {\em Adv. in Math.}, 62(2):169--184, 1986.

\bibitem[Sta86]{StanleySym}
R.~P. Stanley.
\newblock Symmetries of plane partitions.
\newblock {\em J. Combin. Theory Ser. A}, 43(1):103--113, 1986.

\bibitem[Zei96]{ZeilbergerASMProof}
D.~Zeilberger.
\newblock {Proof of the alternating sign matrix conjecture}.
\newblock {\em Electron. J. Comb.}, 3:1--84, 1996.

\end{thebibliography}
\end{document}